\newcolumntype{L}[1]{>{\raggedright\arraybackslash}p{#1}} 	% linksbündig mit Breitenangabe
\newcolumntype{C}[1]{>{\centering\arraybackslash}p{#1}} 	% zentriert mit Breitenangabe
\newcolumntype{R}[1]{>{\raggedleft\arraybackslash}p{#1}} 	% rechtsbündig mit Breitenangabe
\title{Computing Maximal and Minimal Trap Spaces of Boolean Networks}
\author{Hannes Klarner\footnotemark, Alexander Bockmayr and Heike Siebert}
\date{September 2015}
\newcommand{\StateSpace}{S}
\newcommand{\Transition}{\rightarrow}
\newcommand{\SyncTransition}{\twoheadrightarrow}
\newcommand{\AsyncTransition}{\hookrightarrow}
\newcommand{\SteadyStates}[1]{S_{#1}}
\newcommand{\SubSpaces}{S^{\Free}}
\newcommand{\TrapSpaces}[1]{S^{\Free}_{#1}}
\newcommand{\MinTrapSpaces}[1]{\min(\TrapSpaces{#1})}
\newcommand{\MaxTrapSpaces}[1]{\max(\TrapSpaces{#1})}
\newcommand{\PrimeImplicants}{\mathcal P}
\newcommand{\Free}{\ensuremath{\star}}
\newcommand{\SmallestSubspace}[1]{\mathit{Sub(#1)}}
\newcommand{\SW}{5pt}
\newcommand{\Pattern}[4]{\makebox[\SW][c]{#1}\makebox[\SW][c]{#2}\makebox[\SW][c]{#3}\makebox[\SW][c]{#4}}
\let\emptyset\varnothing
\newtheorem{theorem}{Theorem}
\newtheorem{corollary}{Corollary}
\begin{document}

\maketitle

\begin{abstract}
Asymptotic behaviors are often of particular interest when analyzing Boolean networks that represent biological systems such as
signal transduction or gene regulatory networks.
Methods based on a generalization of the steady state notion, the so-called \emph{trap spaces},
can be exploited to investigate attractor properties as well as for model reduction techniques.
In this paper, we propose  a novel optimization-based method for computing all minimal and maximal trap spaces and motivate their use.
In particular, we add a new result yielding a lower bound for the number of cyclic attractors and illustrate the methods with a study of a MAPK pathway model.
To test the efficiency and scalability of the method, we compare the performance of the ILP solver \textsc{gurobi} with the ASP solver \textsc{potassco} in a benchmark of random networks.
\end{abstract}

\footnotetext{$^*$Corresponding author: hannes.klarner@fu-berlin.de, Freie Universität Berlin, FB Mathematik und Informatik, Arnimallee 6, 14195 Berlin, Germany.}

\section{Introduction}

Boolean network models have long since proved their worth in the context of modeling complex biological systems \cite{Albert2012BooleanReview}.
Of particular interest are number and size of attractors as well as their locations in state space since these properties often relate well to important biological behaviors.

In this paper, we explore the notion of \emph{trap space}, which is a subspace of state space that no path can leave, for model reduction and attractor analysis.
After providing the relevant terminology, we demonstrate that trap spaces can be used for model reduction as well as to predict the number and locations of the system's attractors.
We then introduce the \emph{prime implicant graph} as an object which captures the essential dynamical information required to compute trap spaces of a Boolean network.

In practice, methods that exploit trap spaces can only be useful if their identification scales efficiently with the size of the network.
We provide an \emph{Integer Linear Programming} (ILP) and an \emph{Answer Set Programming} (ASP) formulation and present the results of a benchmark that compares the performances of
the ILP solver \textsc{gurobi} and the ASP solving collection \textsc{potassco}.
Finally, we apply our methodology to a MAPK pathway model.

This paper is an extended version of \cite{Klarner2014Computing}.
To allow for a more intuitive understanding of technical terms,
we decided to reformulate some of the previously presented notions in terms of the state space of Boolean networks.
The extension consists of discussing both maximal and minimal trap spaces, an ASP formulation of the optimization problem, the benchmark and an extension of the MAPK case study.

\subsection{Background}\label{Background}

We consider variables from the Boolean domain $\mathbb B=\{0,1\}$ where $1$ and $0$ represent the truth values \emph{true} and \emph{false}.
A Boolean \emph{expression} $f$ over the variables $V=\{v_1,\dots,v_n\}$ is defined by a formula over the grammar
\[f::= 0 \mid 1 \mid v \mid \overline f \mid  f_1\cdot f_2 \mid f_1+f_2\]
where $v\in V$ signifies a variable, $\overline f$ the negation, $f_1\cdot f_2$ the conjunction and $f_1+f_2$ the (inclusive) disjunction of the expressions $f,f_1$ and $f_2$.
Given an \emph {assignment} $x:V\rightarrow\mathbb B$, an expression $f$ can be evaluated to a value $f(x)\in\mathbb B$ by substituting the values $x(v)$ for the variables $v \in V$.
If $f(x)=f(y)$ for all assignments $x,y: V \rightarrow\mathbb B$, we say $f$ is \emph{constant} and write $f=c$, with $c\in\mathbb B$ being the constant value.
A \emph{Boolean network} $(V,F)$ consists of $n$ variables $V=\{v_1,\dots,v_n\}$ and $n$ corresponding Boolean expressions $F=\{f_1,\dots,f_n\}$ over $V$.
In this context, an assignment $x:V\rightarrow\mathbb B$ is also called a \emph{state} of the network and the \emph{state space} $\StateSpace$ consists of all possible $2^n$ states.
We specify states by a sequence of $n$ values that correspond to the variables in the order given in $V$, i.e.,
$x=110$ should be read as $x(v_1)=1, x(v_2)=1$ and $x(v_3)=0$.
The expressions $F$ can be thought of as a function $F:\StateSpace\rightarrow \StateSpace$ governing the network behavior.
The \emph{image} $F(x)$ of a state $x$ under $F$ is defined to be the state $y$ that satisfies $y(v_i)=f_i(x)$.
To illustrate these concepts we introduce a running example in Fig.~\ref{fig:01}.

\begin{figure}
 \begin{align*}
  f_1 &= v_1+v_2 			& f_1(1101) &= 1+1=1 			& F(1101) &=1101 \\
  f_2 &= v_1\cdot v_4 			& f_2(1101) &= 1\cdot 1=1 		& F(0000) &=0001 \\
  f_3 &= \overline{v_1}\cdot v_4 	& f_3(1101) &= \overline{1}\cdot 1=0 	& F(0110) &=1000 \\
  f_4 &= \overline{v_3} 		& f_4(1101) &= \overline{0}=1 		& F(1111) &=1100
 \end{align*}
 \caption{
 (left)  An example Boolean network with four variables.
 (middle) An example for how the Boolean expressions are evaluated for a given state $x=1101$.
 (right) Some examples for the image $F(x)$ of a state $x\in\StateSpace$.
 }\label{fig:01} 
\end{figure}

The \emph{state transition graph} of a Boolean network $(V,F)$ is the directed graph $(\StateSpace,\rightarrow)$ where
the transitions $\rightarrow\;\subseteq \StateSpace\times \StateSpace$ are obtained from $F$ via a given update rule.
We mention two update rules here, the \emph{synchronous rule} and its transition relation $\SyncTransition \;\subseteq \StateSpace\times \StateSpace$,
and the \emph{asynchronous rule} and its transition relation $\AsyncTransition \;\subseteq \StateSpace\times \StateSpace$.
The former is defined by $x\SyncTransition y$ iff $F(x)=y$.
To define $\AsyncTransition$ we need the Hamming distance $\Delta: \StateSpace\times \StateSpace\rightarrow\{1,\dots,n\}$ between states which is given by $\Delta(x,y):=|\{v\in V \mid x(v)\neq y(v)\}|$.
We define $x\AsyncTransition y$ iff either $x=y$ and $F(x)=x$ or $\Delta(x,y)=1$ and $\Delta(y,F(x))<\Delta(x,F(x))$.

A path in $(\StateSpace,\rightarrow)$ is a sequence of states $(x_1,\dots,x_{l+1})$ with $x_i\rightarrow x_{i+1}$, for $1\leq i\leq l$.
An  non-empty set $T\subseteq \StateSpace$ is a \emph{trap set} w.r.t. $\rightarrow$ if for every $x\in T$ and $y\in \StateSpace$ with $x\rightarrow y$ it holds that $y\in T$.
An inclusion-wise minimal trap set is also called an \emph{attractor} of $(\StateSpace,\rightarrow)$.
Note that every trap set contains at least one minimal trap set and therefore at least one attractor.
A variable $v\in V$ is \emph{steady} in an attractor $A\subseteq\StateSpace$ iff $x(v)=y(v)$ for all $x,y\in A$ and \emph{oscillating} otherwise.
We distinguish two types of attractors depending on their size.
If $A\subseteq \StateSpace$ is an attractor and $|A|=1$ then $A$ is called a \emph{steady state} and if $|A|>1$ we call it a \emph{cyclic attractor}.
The cyclic attractors of $(\StateSpace, \SyncTransition)$ are, in general, different from the cyclic attractors of $(\StateSpace, \AsyncTransition)$.
The steady states, however, are identical in both transition graphs because $x\Transition x$ iff $F(x)=x$ in both cases.
Hence, we may omit the update rule and denote the set of steady states by $\SteadyStates F$.
The state transition graphs of the running example is given in Fig.~\ref{fig:02}.

\section{Methods}
\subsection{Trap Spaces}
A \emph{subspace of $\StateSpace$} is characterized by its fixed and free variables.
It may be specified by a mapping $p:U\rightarrow\mathbb B$ where $U\subseteq V$ is the subset of fixed variables,
$p(u)$ the value of $u\in U$ and the remaining variables, $V\setminus U$, are said to be free.
Subspaces are sometimes referred to as \emph{symbolic states} or \emph{partial states}.
We specify subspaces like states but allow in addition the symbol "$\Free$" to indicate that a variable is free, i.e.,
$p=\Pattern{\Free}{\Free}10$ means $U=\{v_3,v_4\}$ and $p(v_3)=1, p(v_4)=0$.
The set $\SubSpaces=\SubSpaces(V)$ denotes all possible $3^n$ subspaces.
Note that states are a special case of subspace and that $\StateSpace\subset\SubSpaces$ holds.
We denote the \emph{fixed variables} $U$ of $p\in\SubSpaces$ by $U_p\subseteq V$.
A subspace $p$ references the states $\StateSpace[p]:=\{x\in \StateSpace \mid \forall v\in U_p: x(v)=p(v)\}$.

A \emph{trap space} is a subspace that is also a trap set.
Trap spaces are related to the \emph{seeds} in \cite{Siebert2011SymbolicSteadyStates} and \emph{stable motifs} in \cite{Zanudo2013StableMotifs}.
Trap spaces are therefore trap sets with a particularly simple structure.
They generalize the notion of steadiness from states to subspaces.
We denote the trap spaces of $(\StateSpace,\Transition)$ by $\TrapSpaces{\Transition}$.
Since every trap set contains at least one attractor,
inclusion-wise minimal trap spaces can be used to predict the location of a particular attractor while maximal trap spaces may be useful in understanding the
commitment of the system to a set of attractors.
Hence, we define a partial order on $\SubSpaces$ based on whether the referenced subspaces are nested: $p\leq q$ iff $\StateSpace[p]\subseteq \StateSpace[q]$.
The \emph{minimal} and \emph{maximal} trap spaces are defined by
\begin{alignat*}{2}
 \MinTrapSpaces{\Transition}:=&\{p\in\TrapSpaces{\Transition}\mid \nexists\; q\in\TrapSpaces{\Transition}:\; q<p\}\\
 \MaxTrapSpaces{\Transition}:=&\{p\in\TrapSpaces{\Transition}\mid \nexists\; q\in\TrapSpaces{\Transition}:\; (q>p) \wedge (\StateSpace[q]\neq \StateSpace)\}
\end{alignat*}
where we use the condition $\StateSpace[q]\neq \StateSpace$ because otherwise $S$ is a priori the (unique) maximal trap space of any network which is only informative in
the special case that the network has a single attractor.
The minimal and maximal trap spaces of the running example are illustrated in Fig.~\ref{fig:02}.
Note that in this case, for each attractor $A\subseteq\StateSpace$ of $(\StateSpace,\AsyncTransition)$ there is $p\in\MinTrapSpaces{\AsyncTransition}$ s.t. $\StateSpace[p]=A$.

\begin{figure}
 \centering
 \makebox[\textwidth][c]{\hspace{2.5cm}\includegraphics[width=13cm]{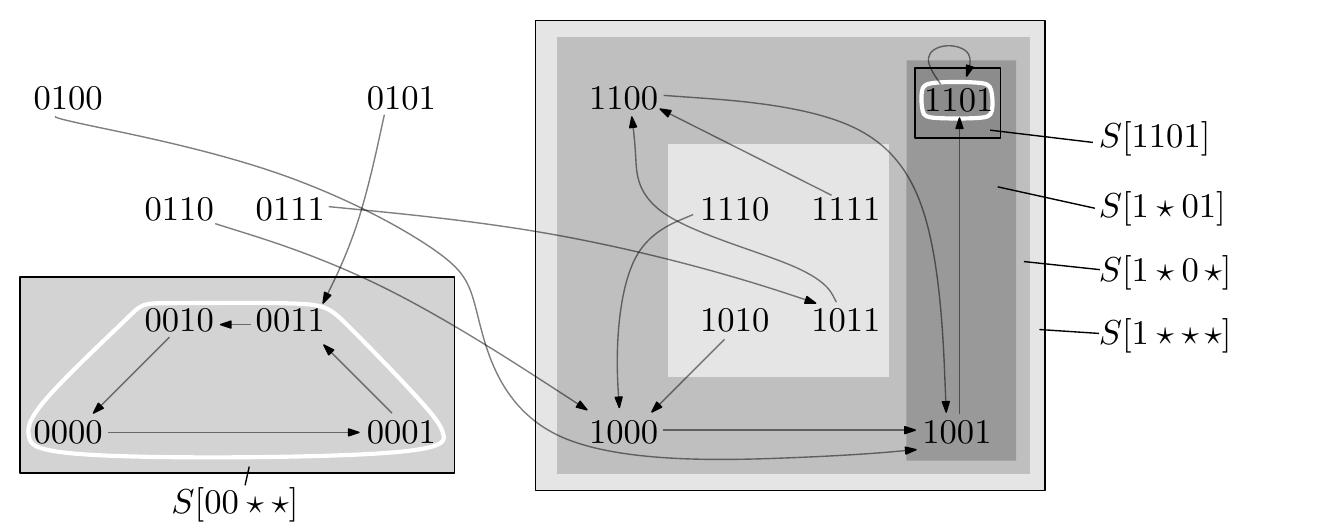}}
 
 (a) The asynchronous transition graph $(S,\AsyncTransition)$
 
 \vspace{3mm}
 
 \makebox[\textwidth][c]{\hspace{2.5cm}\includegraphics[width=13cm]{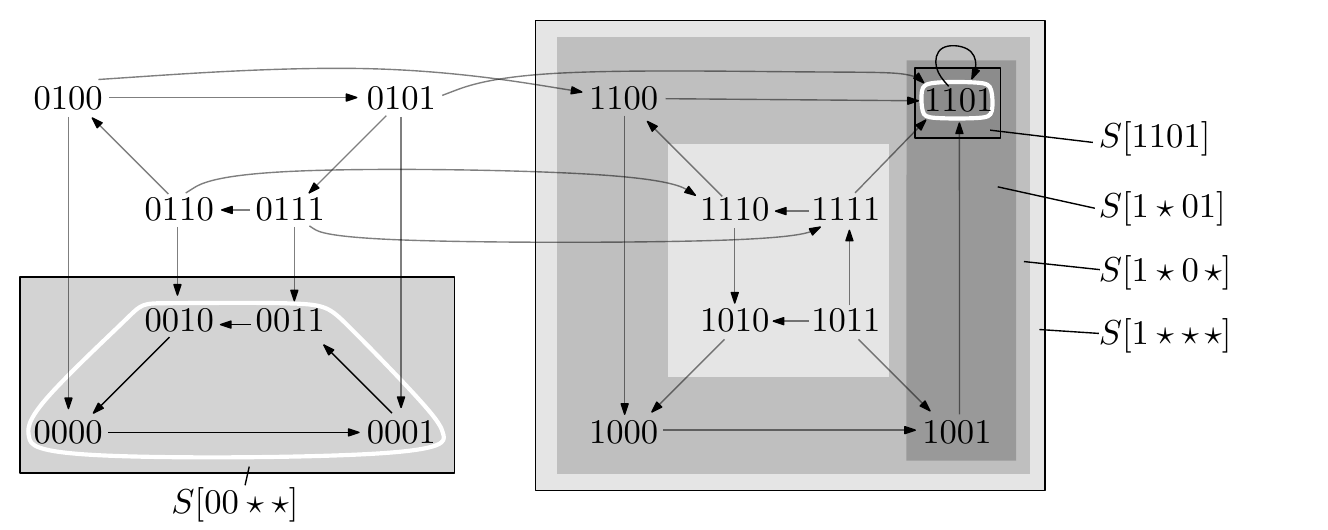}}
 
 (b) The synchronous transition graph $(S,\SyncTransition)$
 
 \caption{
 The synchronous and asynchronous transition graphs of the running example.
 The trap spaces (and the way they are nested) are given as a heat map.
 Minimal and maximal trap spaces are outlined by a black line and the attractors by a white line.
 Using the "$\Free$" notation we get the following sets:
 $\TrapSpaces{\Transition}=\{ \protect\Pattern{\Free}{\Free}{\Free}{\Free},%
				    \protect\Pattern{0}{0}{\Free}{\Free},%
				    \protect\Pattern{1}{\Free}{\Free}{\Free},%
				    \protect\Pattern{1}{\Free}{0}{\Free},
				    \protect\Pattern{1}{\Free}{0}{1},
				    \protect\Pattern{1}{1}{0}{1}\}$,
 $\MinTrapSpaces{\Transition}=\{\protect\Pattern{0}{0}{\Free}{\Free},%
				    \protect\Pattern{1}{1}{0}{1}\}$
 and				    
 $\MaxTrapSpaces{\Transition}=\{\protect\Pattern{0}{0}{\Free}{\Free},%
				    \protect\Pattern{1}{\Free}{\Free}{\Free}\}$
 and $\SteadyStates F=\{1101\}$.
 }\label{fig:02}
\end{figure}

\subsection{Characterization of Trap Spaces}
Analogous to the characterization of steady states by the equation $F(x)=x$ we show that trap spaces can be characterized by the inequality $p\geq F[p]$.

To do so, we first define $f[p]$ to be the expression obtained by substituting the values $p(v)$ for the variables $v\in U_p$ in $f$.
The \emph{image} $F[p]$ of a subspace $p$ under $F=\{f_1,\dots,f_n\}$ is then the subspace $q$ with
$U_q:=\{v_i\in V \mid f_i[p] \text{ is constant} \}$ and $q(v_i):=f_i[p]$, for all $v_i \in U_q$.
The following theorem characterizes trap spaces.
\begin{theorem}\label{thm:1}
 A subspace $p$ is a trap set of $(S,\Transition)$ if and only if $p\geq F[p]$.
\end{theorem}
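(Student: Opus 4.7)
The plan is to reduce the inequality $p \geq F[p]$ to a pointwise condition on the fixed coordinates, and then verify the trap-set property one coordinate at a time for each of the two update rules. Unpacking the definition of $\leq$ on $\SubSpaces$, the relation $p \geq F[p]$ says $\StateSpace[F[p]] \subseteq \StateSpace[p]$, which by the construction of $F[p]$ is equivalent to: for every $v_i \in U_p$, the expression $f_i[p]$ is constant and $f_i[p] = p(v_i)$. Restated in terms of ordinary states, this is the clean algebraic form I will use throughout:
\begin{equation*}
 \text{for every } x \in \StateSpace[p] \text{ and every } v_i \in U_p,\quad f_i(x) \;=\; p(v_i) \;=\; x(v_i).
\end{equation*}

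For the forward direction, I assume this condition and pick an arbitrary $x \in \StateSpace[p]$ together with a successor $x \Transition y$. Under the synchronous rule, $y = F(x)$ and thus $y(v_i) = f_i(x) = p(v_i)$ for every $v_i \in U_p$, placing $y$ in $\StateSpace[p]$. Under the asynchronous rule, either $y = x$ (trivial), or $y$ differs from $x$ in a single coordinate $v_j$ with $y(v_j) = f_j(x) \neq x(v_j)$; the inequality $f_j(x) \neq x(v_j)$ forces $v_j \notin U_p$ by the displayed condition, so $y$ still agrees with $p$ on $U_p$ and again lies in $\StateSpace[p]$.

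For the converse I argue by contraposition: suppose $p \not\geq F[p]$. Then the displayed condition fails, i.e.\ there exist $v_i \in U_p$ and $x \in \StateSpace[p]$ with $f_i(x) \neq p(v_i) = x(v_i)$. For the synchronous rule this is already a witness: $y := F(x)$ satisfies $x \SyncTransition y$ but $y(v_i) = f_i(x) \neq p(v_i)$, so $y \notin \StateSpace[p]$. For the asynchronous rule, let $y$ be the state obtained from $x$ by flipping coordinate $v_i$ only; then $\Delta(x,y)=1$ and $y$ agrees with $F(x)$ at $v_i$ where $x$ disagreed, so $\Delta(y,F(x)) < \Delta(x,F(x))$, giving $x \AsyncTransition y$ while $y \notin \StateSpace[p]$.

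The main obstacle I anticipate is the asynchronous direction, since a single transition need not realize $F(x)$ but only move one step of Hamming distance toward it. The key observation that makes both directions go through uniformly is that the reformulated condition forces $x$ and $F(x)$ to agree on every fixed coordinate of $p$; therefore admissible asynchronous flips from a state in $\StateSpace[p]$ can only touch free coordinates, and conversely any discrepancy at a fixed coordinate is immediately exploitable as an escape transition. Once the equivalence of $p \geq F[p]$ with the pointwise condition is established, the remaining case analysis is routine.
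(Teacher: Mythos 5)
Your proof is correct and follows essentially the same route as the paper's: both reduce $p\geq F[p]$ to the pointwise condition that $f_i(x)=p(v_i)$ for all $x\in\StateSpace[p]$ and $v_i\in U_p$, and then identify this with the absence of escaping transitions. The only difference is that you spell out the synchronous and asynchronous case analysis that the paper's one-line proof leaves implicit, which is a welcome clarification rather than a new idea.
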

\begin{proof}
 $p\in\SubSpaces$ is a trap set of $(S,\Transition)$ iff there are no $x\in \StateSpace[p]$ and $y\in S\setminus\StateSpace[p]$ s.t. $x\Transition y$.
 This is equivalent to $f_i(x)=p(v_i)$ for all $x\in\StateSpace[p]$ and $v_i\in U_p$, which is equivalent to $p\geq F[p]$.
\end{proof}

Note that the proof of Thm.~\ref{thm:1} is identical for the synchronous and asynchronous transition graphs.
As a corollary we get that $\TrapSpaces{\AsyncTransition}=\TrapSpaces{\SyncTransition}$, i.e,
that trap spaces are (like steady states) independent of the update rule.
We therefore write $\TrapSpaces{F}$ instead of $\TrapSpaces{\Transition}$.
For our running example, note that the trap spaces of $(S,\AsyncTransition)$ and $(S, \SyncTransition)$ are indeed identical, see Fig.~\ref{fig:02}.
Note also that $\SteadyStates F\subseteq\MinTrapSpaces F$ and so calculating all minimal trap spaces yields, in addition to the information on cyclic attractors, also all steady states.
Example calculations are given in Fig.~\ref{fig:03}.

\begin{figure}
 \begin{align*}
   p :&= \Pattern{1}{\Free}{\Free}{\Free} 		& q :&= \Pattern{0}{0}{\Free}{\Free} 			& r :&= \Pattern{\Free}{\Free}{1}{1} \\
   f_1[p]  &= 1 					& f_1[q]  &= 0 						& f_1[r]  &= v_1+v_2 \\
   f_2[p]  &= 1\cdot v_4 				& f_2[q]  &= 0 						& f_2[r]  &= v_1\cdot1 \\
   f_3[p]  &= 0 					& f_3[q]  &= \overline 0\cdot v_4 			& f_3[r]  &= \overline{v_1}\cdot1 \\
   f_4[p]  &= \overline{v_3} 				& f_4[q]  &= \overline{v_3}				& f_4[r]  &= 0 \\
   p > F[p] &= \Pattern{1}{\Free}{0}{\Free} 		& q = F[q] &= \Pattern{0}{0}{\Free}{\Free} 		& r \not\geq F[r] &= \Pattern{\Free}{\Free}{\Free}{0}
 \end{align*}
 \caption{
 Three examples of how to compute the image of a subspace under $F$ for the running example.
 $p$ and $q$ satisfy the the condition of Thm.~\ref{thm:1} and are therefore trap spaces.
 }\label{fig:03}
\end{figure}

\subsection{Applications}\label{Methods:Applications}
\subsubsection{Application 1: Model Reduction}\label{Methods:Application1}
Let $T\subseteq \StateSpace$ be a trap set.
Denote by $\SmallestSubspace T\in\SubSpaces$ the \emph{smallest subspace} that contains $T\subseteq\StateSpace$, i.e.,
$U_{\SmallestSubspace T}:=\{v\in V \mid \forall x,y\in T: x(v)=y(v)\}$ and $\SmallestSubspace{T}(v):=x(v)$ for $x\in T$ arbitrary.
Note that, in general, the smallest subspace that contains $T$ is a superset of $T$, i.e., $\StateSpace[\SmallestSubspace T]\supseteq T$.
A natural model reduction technique is based on the observation that for any trap set $T\subseteq \StateSpace$, 
the transitions of any path with an initial state $x_1\in T$ only depend on the reduced system $(V_p,F_p)$ with $p:=\SmallestSubspace T$ and
\[V_p := \{v\in V \mid v\not\in U_{p}\}, \;\; F_p := \{f_i[p] \mid f_i\in F,v_i\in V_p\}.\]
Intuitively speaking, the network $(V_p,F_p)$ is obtained by "dividing out" the fixed variables of $p$,
see \cite{Siebert2011SymbolicSteadyStates} for more details.
Note that, by definition, $\SmallestSubspace{\StateSpace[p]}=p$ for any $p\in\SubSpaces$.
In particular, trap spaces $p$ have this property and so they naturally lend themselves for this reduction technique.

\subsubsection{Application 2: Cyclic Attractors}\label{Methods:Application2}
The following theorem is based on the observation that a minimal trap space is either itself a steady state or it contains no steady state at all.

\begin{theorem}
 $|\MinTrapSpaces F\setminus\SteadyStates F|$ is a lower bound on the number of cyclic attractors of $(\StateSpace,\Transition)$.
\end{theorem}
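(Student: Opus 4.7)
The plan is to argue that each minimal trap space in $\MinTrapSpaces F \setminus \SteadyStates F$ contributes at least one cyclic attractor, and that these cyclic attractors are pairwise distinct. The remark preceding the theorem already tells us the key dichotomy: a minimal trap space either \emph{is} a steady state or it contains no steady state at all. So the main work is to justify that dichotomy, to convert ``contains no steady state'' into ``contains a cyclic attractor,'' and to handle the counting.

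First I would establish that every singleton $\{x\} \subseteq \StateSpace$ with $F(x) = x$ is itself a trap space: viewed as a subspace, all variables of $x$ are fixed, so $\StateSpace[x] = \{x\}$, and by Thm.~\ref{thm:1} this is a trap space iff $F[x] \leq x$, which is precisely $F(x) = x$. Consequently, if a minimal trap space $p$ satisfies $\StateSpace[p] \cap \SteadyStates F \neq \emptyset$, say $x \in \StateSpace[p]$ with $F(x) = x$, then $\{x\}$ is a trap space with $\{x\} \leq p$. Minimality of $p$ then forces $p = x$, i.e.\ $p \in \SteadyStates F$. Contrapositively, if $p \in \MinTrapSpaces F \setminus \SteadyStates F$, then $\StateSpace[p]$ contains no steady state.

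Next, since $\StateSpace[p]$ is a nonempty trap set, the background section guarantees it contains at least one attractor $A \subseteq \StateSpace[p]$. Because $\StateSpace[p]$ has no steady states, $A$ cannot be a singleton, so $A$ is a cyclic attractor. Thus to every $p \in \MinTrapSpaces F \setminus \SteadyStates F$ we can assign a cyclic attractor $A_p \subseteq \StateSpace[p]$.

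It remains to check that the map $p \mapsto A_p$ is injective. For this I would argue that distinct minimal trap spaces have disjoint state sets: if $p, q \in \MinTrapSpaces F$ and $\StateSpace[p] \cap \StateSpace[q] \neq \emptyset$, then $p$ and $q$ agree on $U_p \cap U_q$, so their intersection corresponds to a subspace $r$ with $U_r = U_p \cup U_q$ and $\StateSpace[r] = \StateSpace[p] \cap \StateSpace[q]$. As the intersection of two trap sets is a trap set, $r$ is a trap space with $r \leq p$ and $r \leq q$, and minimality forces $r = p = q$. So distinct minimal trap spaces yield disjoint $\StateSpace[p]$, hence disjoint and therefore distinct cyclic attractors $A_p$. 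The main obstacle is really just that first structural lemma about singletons and the disjointness of minimal trap spaces; once those are in place, the counting is immediate.
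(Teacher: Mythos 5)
Your proof is correct and takes essentially the same route as the paper: each $p\in\MinTrapSpaces F\setminus\SteadyStates F$ contains an attractor, and that attractor cannot be a singleton because a steady state $x$ inside $\StateSpace[p]$ would itself be a trap space with $x<p$, contradicting minimality. You additionally spell out the disjointness of distinct minimal trap spaces needed to make the count injective, a step the paper leaves implicit.
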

\begin{proof}
 Let $p\in\MinTrapSpaces F\setminus\SteadyStates F$.
 Since $\StateSpace[p]$ is a trap set, it contains an attractor $A\subseteq \StateSpace[p]$.
 If $A=\{x\}$ then $x\in\TrapSpaces F$ such that $x<p$, which contradicts the minimality of $p$.
\end{proof}

Furthermore, since $\SmallestSubspace{\StateSpace[p]}=p$ for $p\in \TrapSpaces F$, we may conclude that
some $v\in V\setminus U_{\SmallestSubspace{\StateSpace[p]}}$ must be involved in the cyclic behavior.
In our running example, the subspace $p=\Pattern00{\Free}{\Free}$ is a minimal trap space and therefore contains only cyclic attractors
in which either $v_3$ or $v_4$ or both oscillate, see Fig.~\ref{fig:02}.

\subsection{The Prime Implicant Graph}\label{Methods:PrimeImplicantGraph}
In this section we propose a method for computing trap spaces.
The idea is to translate the task into a hypergraph problem in which trap spaces are represented by a sets of arcs that satisfy certain constraints.
We consider a directed hypergraph in which each arc corresponds to a minimal size implicant of $f_i$ or $\overline{f_i}$, for some $v_i\in V$.
Minimal size implicants are also called prime implicants, see e.g. \cite{Crama2011BooleanFunctions}.
Here, we define the following slight variation:
For $c\in\mathbb B$, a \emph{$c$-prime implicant} of a non-constant expression $f$ is a subspace $p\in \SubSpaces$ satisfying $f[p]=c$, and $f[q]\neq c$ for all $q>p$.
For a constant $f_i=c$ we define that $p$ with $U_p:=\{v_i\}$ and $p(v_i)=c$ is its single prime implicant.
In the running example, $\Pattern11{\Free}{\Free}$ satisfies $f_1[\Pattern11{\Free}{\Free}]=1$.
But it is not a $1$-prime implicant of $f_1$, because $\Pattern1{\Free}{\Free}{\Free}>\Pattern11{\Free}{\Free}$ and $f_1[\Pattern1{\Free}{\Free}{\Free}]=1$.

The set of all prime implicants of a Boolean network is denoted by $\PrimeImplicants=\PrimeImplicants(V,F)$ and consists of all $(p,c,v_i)\in \SubSpaces\times\mathbb B\times V$
such that $p$ is a $c$-prime implicant of $f_i$.
%\[\PrimeImplicants=\PrimeImplicants(V,F):=\{(p,c,v_i)\in \SubSpaces\times\mathbb B\times V \mid \textrm{$p$ is a $c$-prime implicant of $f_i$}\}.\]
The \emph{prime implicant graph} is the directed hypergraph $(\mathcal N, \mathcal A)$ where $\mathcal{N}=\mathcal N(V):=\{p\in \SubSpaces \mid |U_p|=1\}$ 
consists of all subspaces with $|U_p|=1$ (corresponding to literals in propositional logic).
To define the arcs $\mathcal A=\mathcal A(V,F)\subset 2^{\mathcal N}\times 2^{\mathcal N}$ we observe that a subspace $p$ can be written (uniquely) as the intersection
between $k:=|U_p|$ subspaces $p_1,\dots, p_k$ such that $|U_{p_i}|=1$ for all $i$.

The arcs $\mathcal A=\mathcal A(V,F)\subset 2^{\mathcal N}\times 2^{\mathcal N}$ are defined by the mapping
\[\alpha:\PrimeImplicants\rightarrow 2^{\mathcal N}\times 2^{\mathcal N}, \quad (p,c,v_i)\mapsto(\{p_1,\dots,p_k\}, \{q\})\]
where 
\begin{itemize}
 \item[(1)] $p_1,\dots,p_k$ is the decomposition of $p$ into $k=|U_p|$ subspaces $p_i$ with $|U_{p_i}|=1$.
 \item[(2)] $q$ is defined by $U_q:=\{v_i\}$ and $q(v_i):=c$.
\end{itemize}
The prime implicant graph has exactly one arc for every prime implicant, i.e., 
$\mathcal A:=\{\alpha(p,c,v_i) \mid (p,c,v_i)\in \PrimeImplicants\}$.
The \emph{head} of an arc $a=(\{p_1,\dots,p_k\}, \{q\})$ is denoted by $H(a):=q$, and its \emph{tail} by $T(a):=p$ where $p$ is the intersection of $p_1,\dots,p_k$.
The prime implicant graph of the running example is given in Fig.~\ref{fig:04}.

\begin{figure}
 \begin{align*}
  (\Pattern{1}{\Free}{\Free}{\Free},1,v_1) 	& \mapsto a_1 &
  (\Pattern{\Free}1{\Free}{\Free},1,v_1)	& \mapsto a_2 &
  (\Pattern00{\Free}{\Free},0,v_1)		& \mapsto a_3 \\
  (\Pattern{1}{\Free}{\Free}1,1,v_2)		& \mapsto a_4 &
  (\Pattern{0}{\Free}{\Free}{\Free},0,v_2) 	& \mapsto a_5 &
  (\Pattern{\Free}{\Free}{\Free}0,0,v_2) 	& \mapsto a_6 \\
  (\Pattern{0}{\Free}{\Free}1,1,v_3)		& \mapsto a_7 &
  (\Pattern{1}{\Free}{\Free}{\Free},0,v_3)	& \mapsto a_8 &
  (\Pattern{\Free}{\Free}{\Free}0,0,v_3)	& \mapsto a_9 \\
  (\Pattern{\Free}{\Free}0{\Free},1,v_4)	& \mapsto a_{10} &
  (\Pattern{\Free}{\Free}1{\Free},0,v_4)	& \mapsto a_{11} &
 \end{align*}
 \begin{center}(a)\end{center}
 \begin{center}\includegraphics[width=.6\linewidth]{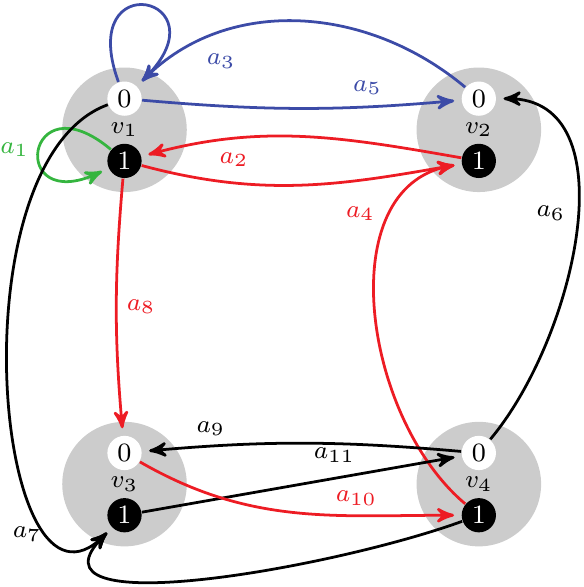}\end{center}
 \begin{center}(b)\end{center}
 \begin{align*}
   &A=\{a_1\} 			& &H(A)=\Pattern1\Free\Free\Free 	& &D_1=\{a_1,a_4,a_8,a_{10}\} \\
   &B=\{a_1,a_8\} 		& &H(B)=\Pattern1\Free0\Free 		& &D_2=\{a_2,a_4,a_8,a_{10}\} \\
   &C=\{a_1,a_8,a_{10}\}	& &H(C)=\Pattern1\Free01 		& &D_3=\{a_1,a_2,a_4,a_8,a_{10}\}\\
   &H(D_1)=1101 		& &H(D_2=1101 				& &H(D_3)=1101
 \end{align*}
 \begin{center}(c)\end{center}
 \caption{
 (a) The complete set $\PrimeImplicants$ of prime implicants of the example network, given in $(p,c,v_i)$ notation.
 (b) The prime implicant graph $(\mathcal N,\mathcal A)$ of the running example.
 The eight nodes are grouped into four pairs that belong to the same variable $v_i$.
 Gray discs represent the groups and are not elements of $\mathcal N$.
 Nodes that correspond to positive literals are drawn in black and negated literals in white.
 Hyperarcs are represented by several arcs having a common arrowhead.
 The colors indicate three of the five stable and consistent arc sets.
 (c) The non-empty stable and consistent arc sets and their induced trap spaces.
 }\label{fig:04}
\end{figure}

\subsection{Prime Implicants and Trap Spaces}
Now we establish a relationship between subsets of $\mathcal A$ and trap spaces.
To do so we need the notions of \emph{consistency} and \emph{stability}.
A subset $A\subseteq\mathcal A$ is \emph{consistent} if for all $a_1,a_2\in A$ and $v\in U_{H(a_1)}\cap U_{H(a_2)}$ it holds that $H(a_1)(v)=H(a_2)(v)$.
If $A=\{a_1,\dots,a_m\}\subseteq\mathcal A$ is consistent then the intersection between $H(a_1),\dots,H(a_m)$ is non-empty,
called the \emph{induced subspace} of $A$ and denoted by $H(A)$.
For the special case $A=\emptyset$ we define $H(A)$ by $U_{H(A)}:=\emptyset$.
A subset $A\subseteq\mathcal A$ is \emph{stable} if for every $a\in A$ there is a consistent subset $B_a\subseteq A$ such that $T(a)\geq H(B_a)$.
Intuitively, in this case the requirement $T(a)$ for each implication $a\in A$ to become effective is met by some assumptions $B_a$.
The stable and consistent subsets of $\mathcal A$ for the running example and their induced subspaces are given in Fig.~\ref{fig:04}.
The central idea for the computation of $\TrapSpaces F$ is given in the next result:
\begin{theorem}
 $p\in\SubSpaces$ is a trap space if and only if there is a stable and consistent $A\subseteq\mathcal A$ such that $H(A)=p$.
\end{theorem}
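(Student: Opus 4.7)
The plan is to use Theorem~\ref{thm:1} to reduce the statement to showing that the condition $p \geq F[p]$, i.e.\ $f_i[p]$ is the constant expression $p(v_i)$ for every $v_i \in U_p$, is equivalent to the existence of a stable and consistent $A \subseteq \mathcal A$ with $H(A)=p$. Both directions will proceed by explicit construction.

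For the ``only if'' direction, suppose $p \in \TrapSpaces{F}$. For each $v_i \in U_p$ I would greedily unfix variables of $p$ while preserving $f_i[\cdot] = p(v_i)$, producing a subspace $q_i \geq p$ that is a $p(v_i)$-prime implicant of $f_i$. This uses the small lemma that a local maximum under one-variable unfixing is a genuine prime implicant: if $r > q_i$ also satisfied $f_i[r] = p(v_i)$, then for any $v \in U_{q_i} \setminus U_r$ the subspace $q'$ obtained by unfixing $v$ in $q_i$ would sit in the chain $q_i < q' \leq r$, and hence $\StateSpace[q'] \subseteq \StateSpace[r]$ would force $f_i[q'] = p(v_i)$, contradicting local maximality. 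The corner case of a constant $f_i$ is handled by the paper's special-purpose convention for its single prime implicant. Set $a_i := \alpha(q_i, p(v_i), v_i)$ and $A := \{a_i \mid v_i \in U_p\}$. Each $H(a_i)$ has support $\{v_i\}$ with value $p(v_i)$, so $A$ is trivially consistent and $H(A) = p$. Stability is witnessed by $B_{a_i} := A$ for each $i$, because $T(a_i) = q_i \geq p = H(A) = H(B_{a_i})$.

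For the ``if'' direction, let $A \subseteq \mathcal A$ be stable and consistent with $H(A) = p$. Fix $v_i \in U_p$; since $v_i \in U_{H(A)}$, some arc $a \in A$ has $U_{H(a)} = \{v_i\}$, and consistency forces $H(a)(v_i) = p(v_i)$. By construction of $\mathcal A$, the arc $a$ arises from a prime implicant $(T(a), p(v_i), v_i)$, so $f_i[T(a)]$ is already the constant $p(v_i)$. Stability yields a consistent $B_a \subseteq A$ with $T(a) \geq H(B_a)$; since $B_a \subseteq A$ implies $H(B_a) \geq H(A) = p$, we obtain $T(a) \geq p$. Substituting the additional fixed values of $p$ into an already-constant expression preserves the constant, so $f_i[p] = f_i[T(a)] = p(v_i)$. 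Applying Theorem~\ref{thm:1} finishes the proof.

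The main obstacle is not depth but careful bookkeeping with the partial order: one must consistently remember that $p \leq q$ means $p$ has \emph{more} fixed variables than $q$, and correspondingly that $B_a \subseteq A$ gives $H(B_a) \geq H(A)$ rather than the reverse. Beyond this, the only substantive technical point is the lemma that greedy single-variable unfixing terminates at a true prime implicant; it is what lets the forward construction produce an arc for every $v_i \in U_p$.
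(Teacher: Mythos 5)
Your proof is correct and follows essentially the same route as the paper: both directions reduce to the characterization $p\geq F[p]$ from Theorem~\ref{thm:1}, the forward direction builds $A$ from prime implicants $q_i\geq p$ exactly as the paper does, and the backward direction extracts $f_i[p]=p(v_i)$ from the arcs' heads. You merely supply two details the paper leaves implicit --- the greedy-unfixing argument for the existence of a prime implicant above $p$, and the chain $T(a)\geq H(B_a)\geq H(A)$ that makes stability do its work in the converse --- both of which are accurate.
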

\begin{proof}
 The statement of the theorem is true if $U_p=\emptyset$ because $A=\emptyset$ is stable and consistent and $H(A)=p$ by definition.
 
 Hence, assume $U_p\neq\emptyset$. Let $v_i\in U_p$.
 Since $p\geq F[p]$ it follows that $f_i[p]=p(v_i)$ and hence that there is a $p(v_i)$-prime implicant $q_i$ of $f_i$ that satisfies $q_i\geq p$.
 The set $A:=\{\alpha(q_i,p(v_i),v_i) \mid v_i\in U_p\}$ is, by definition, consistent and satisfies $H(A)=p$.
 But it is also stable because $T(a)\geq p$ for all $a\in A$.
 Let $\emptyset\neq A\subseteq\mathcal A$ be stable and consistent.
 Then there exists $a\in A$ s.t. $H(a)=H(A)(v_i)$ for all $v_i\in U_{H(A)}$.
 Hence $H(A)\geq F[H(A)]$ and $H(A)\in \TrapSpaces F$.
\end{proof}

The following corollary points out that extremal (i.e., minimal or maximal) stable and consistent arc sets correspond to extremal trap spaces.
As a partial order on the subsets of $\mathcal A$ we use set-inclusion.
\begin{corollary}\label{cor:01}
 (1) If $p\in \MaxTrapSpaces F$ then there is a non-empty minimal stable and consistent $A\subseteq\mathcal A$ s.t. $H(A)=p$.
 (2) If $p\in \MinTrapSpaces F$ then there is a maximal stable and consistent $A\subseteq\mathcal A$ s.t. $H(A)=p$.
\end{corollary}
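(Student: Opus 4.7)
My plan is to derive both parts of the corollary from the preceding theorem together with the monotonicity observation that whenever $A\subseteq A'$ are both consistent subsets of $\mathcal A$, the induced subspaces satisfy $H(A')\leq H(A)$: adding arcs only adds heads to intersect, so more variables become fixed. A companion fact I would use is that every \emph{non-empty} consistent $A$ has $U_{H(A)}\neq\emptyset$, because every single arc head already fixes exactly one variable, hence $\StateSpace[H(A)]\neq\StateSpace$.

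For part (1), let $p\in\MaxTrapSpaces F$. From the definition we have $\StateSpace[p]\neq\StateSpace$, hence $U_p\neq\emptyset$, and the preceding theorem supplies a non-empty stable and consistent $A_0\subseteq\mathcal A$ with $H(A_0)=p$. I would take $A$ to be such a set of minimum cardinality and claim it is already minimal among \emph{all} non-empty stable and consistent arc sets. Indeed, if $A'\subsetneq A$ were a non-empty stable and consistent proper subset, monotonicity would give $H(A')\geq p$, and the preceding theorem would make $H(A')$ a trap space. Non-emptiness of $A'$ ensures $\StateSpace[H(A')]\neq\StateSpace$, so maximality of $p$ would force $H(A')=p$, contradicting the minimum choice of $|A|$. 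This non-emptiness bookkeeping is exactly why the statement asks for a "non-empty minimal" set: the unique minimal stable and consistent set tout court is $\emptyset$, which corresponds to the full state space and is excluded precisely by the condition $\StateSpace[q]\neq\StateSpace$ in the definition of $\MaxTrapSpaces F$.

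For part (2), let $p\in\MinTrapSpaces F$ and pick $A$ of maximum cardinality among the stable and consistent arc sets with $H(A)=p$ (existence is again by the preceding theorem). Any stable and consistent $A'\supsetneq A$ would satisfy $H(A')\leq p$ by monotonicity, and $H(A')$ would be a trap space by the preceding theorem. Minimality of $p$ then forces $H(A')=p$, contradicting the maximality of $|A|$. I do not foresee a significant obstacle in either direction; the entire argument is a routine translation of the extremality conditions on trap spaces through the order-reversing map $A\mapsto H(A)$, with the asymmetric handling of the full state space accounting for the asymmetry between the two clauses.
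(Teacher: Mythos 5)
Your argument is correct, and it is exactly the formalization the paper intends: the corollary is stated without proof, but the remark immediately following it about the order-reversal between arc-set inclusion and the subspace order is precisely the monotonicity fact ($A\subseteq A'$ consistent implies $H(A')\leq H(A)$) on which your proof turns, together with the observation that non-empty consistent sets induce proper subspaces. Both extremality transfers and the asymmetric treatment of the empty set are handled correctly, so nothing is missing.
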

Note that the inverse relationship (maximal arc sets induce minimal trap spaces and the other way around) stems from the fact that the order on $\SubSpaces$ considers the
free variables while the order on arc sets is in terms of fixed variables.
Note that stable and consistent arc sets are a generalization of the so-called
\emph{self-freezing circuits} which were described and studied in \cite{Fogelman1985BooleanNetworks,Kauffman1993Origins}.
These circuits are based on \emph{canalizing effects} of $F$ which correspond to prime implicants $(p,c,v_i)\in\PrimeImplicants$ that satisfy $|U_p|=1$.
Self-freezing circuits therefore correspond to trap spaces whose stable and consistent $A\subseteq\mathcal A$ satisfy $|U_{T(a)}|=1$ for all $a\in A$.

\subsection{Computation of Trap Spaces}\label{Methods:Computation}
In this section we formulate a 0-1 optimization problem to compute all extremal stable and consistent $A\subseteq\mathcal A$ and therefore all extremal trap spaces.
To solve it in practice, we suggest using solvers for \emph{Integer Linear Programming} (ILP) or \emph{Answer Set Programming} (ASP) and give a reformulation of the constraints
in terms of each language.

As a preliminary step the set $\PrimeImplicants(V,F)$ has to be enumerated, see e.\,g.\ \cite{Jabbour2014EnumeratingPIs}.
Although the number of prime implicants may grow exponentially with the number of variables that the expression depends on, see e.\,g.\ \cite{Crama2011BooleanFunctions},
we found that for typical biological models these dependencies are so small that the enumeration of $\PrimeImplicants$ is negligible compared
with finding consistent and stable arc sets.

We now formulate the 0-1 optimization problem.
For every arc $a=(p,c,v_i)\in\PrimeImplicants$ we introduce a variable $x_a \in \{0,1\}$ that indicates whether or not $a$ is a member of
the set $A\subseteq\mathcal A$ that we want to compute.
We denote these variables by $X:=\{x_a \mid a\in \PrimeImplicants\}$.
In addition, we introduce for every $v_i\in V$ two variables $y^0_i,y^1_i \in \{0,1\}$ that indicate whether $v_i$ is fixed in the trap space and,
if so, what value it takes.
We denote them by $Y:=\{y^c_i \mid c\in\mathbb B,v_i\in V\}$.
For any $c\in\mathbb B$, we require $y_i^c =1$ if and only if $v_i\in U_{H(A)}$ and $H(A)(v_i)=c$.
To encode this requirement, we use the logical constraints
\begin{equation}\tag{C1}
 y_i^c  \enspace \Longleftrightarrow \enspace \bigvee_{a\in B_i^c}x_a, \qquad \text{ for all } c \in \mathbb B, v_i \in V,
\end{equation}
where $B_i^c:=\{a\in \mathcal A \mid \{v_i\}=U_{H(a)},H(a)(v_i)=c \}$ denotes the arcs inducing $v_i$ to take the value $c$,
and $\Rightarrow$ and $\vee$ are the standard logical connectives for implication and disjunction.
To enforce that the set $A:=\{a\in\mathcal A \mid x_a=1\}$ is stable and consistent we add the following constraints (C2) resp.\ (C3):
\begin{align}\tag{C2}
  x_a & \Rightarrow  y_i^{T(a)(v_i)},  & \text{ for all }  & a\in\mathcal A, v_i\in U_{T(a)}, \\
\tag{C3}
  \overline{y_i^0} & \; \vee \; \overline{y_i^1}, & \text{ for all } & v_i \in V.
\end{align}
The maximal resp. minimal stable and consistent $A\subseteq\mathcal A$ correspond to all solutions of
\begin{equation}
 \tag{0-1 max} \text{max} \sum_{x_a\in X}x_a,\quad  \text{ s.t. (C1),(C2),(C3)},
\end{equation}
respectively
\begin{equation}
 \tag{0-1 min} \text{min} \sum_{x_a\in X}x_a,\quad  \text{ s.t. (C1),(C2),(C3) and $1\leq \sum_{x_a\in X}x_a$},
\end{equation}
where the additional constraint for the minimal solutions forbids the empty set.

\subsubsection{ILP Formulation}
We now reformulate the above constraints by linearizing the logical operators.
The resulting problem can be solved with standard ILP solvers.
The first constraints, (ILP1), enforce that $y_i^c$ if there is no arc targeting it:
\begin{equation}\tag{ILP1}
  y_i^c  \leq \displaystyle\sum_{a\in B_i^c}x_a \quad\text{ and }\quad\forall a\in B_i^c: x_a\leq y_i^c,
\end{equation} %\vspace{-.3cm}
for every $c\in \mathbb B$ and $v_i\in V$.
The next constraints enforce stability and consistency, respectively:
\begin{align}\tag{ILP2}
  x_a & \leq y_i^{T(a)(v_i)}, &\text{ for all } & a\in\mathcal A, v_i\in U_{T(a)}, \\
\tag{ILP3}
  y_i^0 & +y_i^1\leq 1,  & \text{ for all }  & v_i\in V.
\end{align}

A \textsc{python} implementation using \textsc{gurobi} \cite{Gurobi} is available from \cite{BoolNetFixpoints}.
Since ILP solvers usually do not return multiple solutions we suggest to iteratively add \emph{no-good-cuts} that make the current solution infeasible but do not otherwise affect the feasible region,
until there are no more solutions.
Suppose $s:X\cup Y\rightarrow \{0,1\}$ is a solution.
Cuts for the maximization and minimization of (0-1), respectively, are

\begin{align*}
 & 1\leq\sum_{x_a\in G}x_a, 		& \text{where }	& G:=\{x_a\in X \mid s(x_a)=0\}\\
 & \sum_{x_a\in E}x_a\leq|E|-1,		& \text{where }	& E:=\{x_a\in X \mid s(x_a)=1\}.
\end{align*}

\subsubsection{ASP Formulation}
We now reformulate the constraints (C1)-(C3) as an answer set program, see e.g. \cite{Potassco}.
To encode the arcs $\mathcal A$ we introduce two ternary predicates, \texttt{head(v,c,ID)} and the \texttt{tail(v,c,ID)},
where \texttt v refers to some $v\in V$, \texttt c to a value in $c\in\mathbb B$ and \texttt{ID} is an index that determines to which arc a tail or a head belongs.
Each $a\in\mathcal A$ is then translated into a number of so-called \emph{facts} by stating all the tail elements and the unique head element it consists of.
For example, an arc $a_3=(\Pattern00{\Free}{\Free},0,v_1)$ of the running example in Fig.~\ref{fig:04} becomes
\begin{center}
 \texttt{head(v1,0,a3). tail(v1,0,a3). tail(v2,0,a3).}
\end{center}
Note that the index \texttt{a3} links the data together.
In the generate-and-test fashion of defining ASP problems we generate all possible subsets of $\mathcal A$ 
and introduce an unary predicate \texttt{x(ID)} to indicate whether the arc with index \texttt{ID} belongs to the solution $A\subseteq\mathcal A$ or not.
It encodes the variables $x_a\in X$ in the formulation of the (0-1) problem above.
\begin{center}
 \texttt{\{x(ID) : head(v,c,ID)\}.}
\end{center}
The ASP formulation does not require the auxiliary variables $Y$ and hence can do without (C1).
The stability (C2) is translated into the filter (ASP2) which forbids the existence of an arc (with identifier \texttt{ID1}) such that one of its tail nodes
is not also the head of another arc (with identifier \texttt{ID2}).
The consistency (C3) is translated into the filter (ASP3).
It forbids that two arcs (with identifiers \texttt{ID1} and \texttt{ID2}) target the same \texttt v but at different values \texttt 0 and \texttt 1.

\begin{equation}\tag{ASP2}
 \texttt{:- x(ID1), tail(v,c,ID1), not x(ID2): head(v,c,ID2).}\\
\end{equation}

\begin{equation}\tag{ASP3}
 \texttt{:- x(ID1), x(ID2), head(v,1,ID1), head(v,0,ID2).}\\
\end{equation}

To compute multiple solutions is built into ASP solvers and the solving collection \textsc{potassco} \cite{Potassco} also features the option to find set-inclusion minimal or maximal
solutions with respect to the predicates that are true.
For the problem at hand we optimize over the predicate \texttt{x(ID)}.
To forbid the empty solution when minimizing we use the additional filter "\texttt{\;:- \{x(\_)\} 0.\;}".
A \textsc{python} implementation using the solving collection \textsc{potassco} is available at \cite{BoolNetFixpoints}.

\subsubsection{Benchmark}
\begin{figure*}
 \centering
 \makebox[\textwidth][c]{\includegraphics[height=6cm]{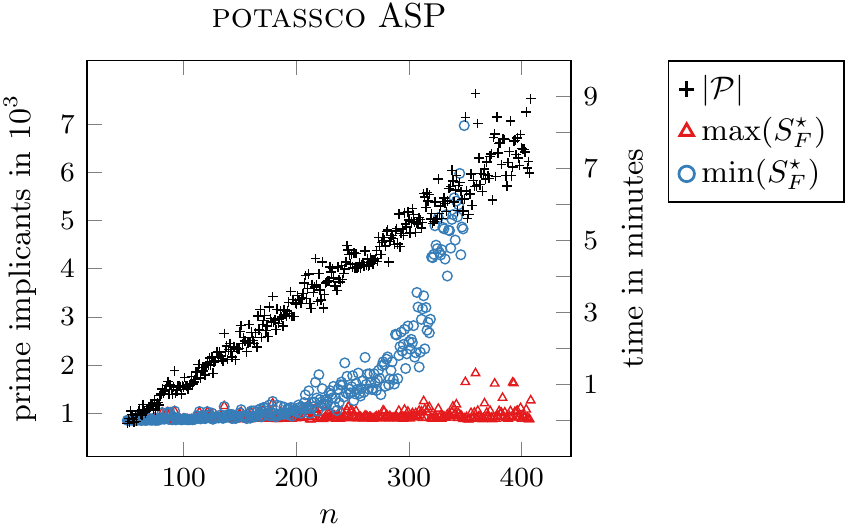}\includegraphics[height=6cm]{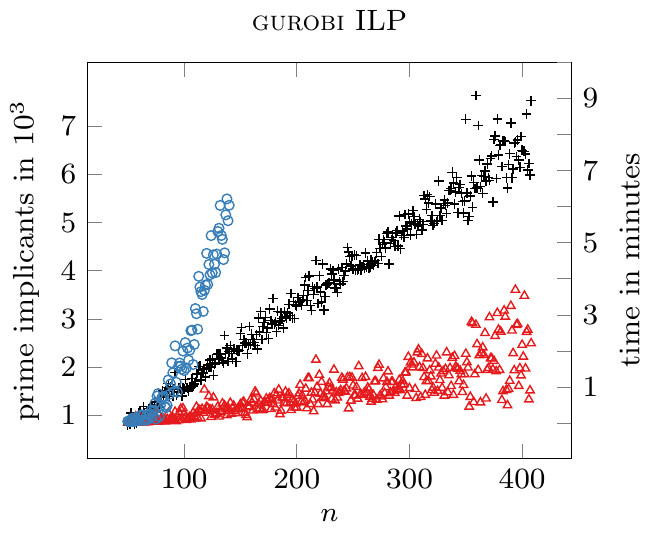}}\\
 (a) Running times for the different solvers\vspace{.5cm}
 
 \makebox[\textwidth][c]{\includegraphics[height=6cm]{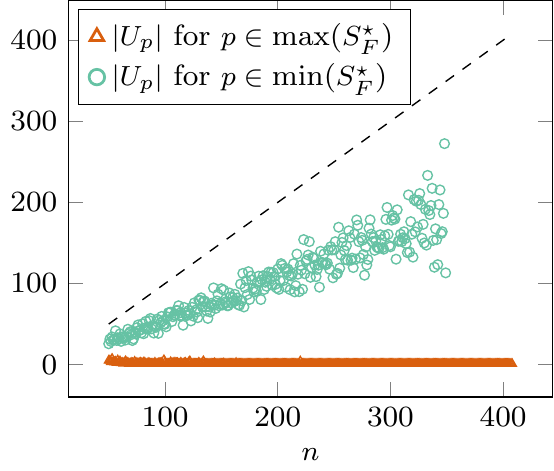}\hspace{.5cm}\includegraphics[height=6.07cm]{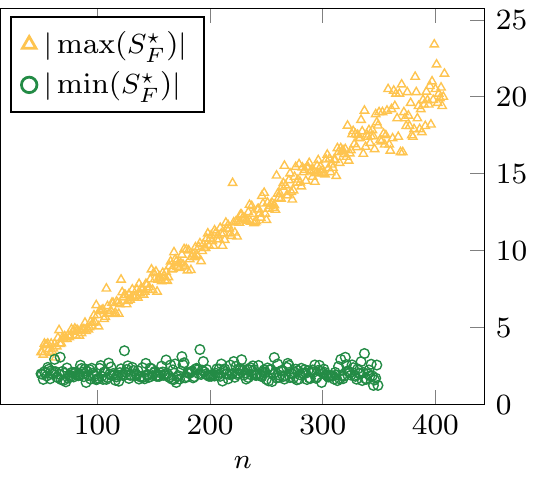}}\\
 
 (b) Number and sizes of trap spaces
 \caption{
 (a) The average running times of \textsc{potassco} and \textsc{gurobi} for computing $\MinTrapSpaces F$ and $\MaxTrapSpaces F$.
 Each plot also contains the total number of prime implicants $|\PrimeImplicants|$.
 Discontinued plots indicate a time-out of the respective solver.
 (b) The plot on the left shows how many variables $|U_p|$ are (on average) fixed in a minimal and maximal trap space $p$.
 The dashed line is a plot of $f(n)=n$ and therefore indicates how close a trap spaces is to being a steady state.
 The plot on the right shows the average number of minimal and maximal trap spaces.
 }\label{fig:05}
\end{figure*}

To test the efficiency and scalability of the ASP and ILP formulations we created random Boolean networks and recorded the time necessary to compute
$\MinTrapSpaces F$ and $\MaxTrapSpaces F$ with each solver.
For the results to be reproducible we decided to use the function \texttt{generateRandomNKNetwork} of the R package \textsc{boolnet} \cite{Mussel2010BoolNet}.
It takes the parameters $n$ and $k$ which specify the number of variables $n=|V|$ and number of variables $k$ that each $f\in F$ depends on.
In addition to $n$ and $k$ there are different configurations for the \emph{topology},
the \emph{linkage} and the \emph{bias} in the generation of the truth tables of the Boolean expressions.
We decided to generate networks whose in-degree follows a Poisson distribution (\texttt{topology="homogeneous"}) with mean and variance equal to $k=3$ and
left the other parameters at their default values (\texttt{linkage="uniform"} and \texttt{functionGeneration="uniform"}).
This setup allows variables with high in-degrees (hubs) as well as low in-degrees (e.g. inputs, cascades, outputs) that frequently appear in models of biological networks.

For each $n$, starting from $n=50$, we generated $50$ networks and called \textsc{gurobi} and \textsc{potassco} to find all minimal and maximal trap spaces of each.
For each network we recorded the
number of prime implicants,
number of minimal and maximal trap spaces,
average number of fixed variables in the trap spaces
and the average time for each solver to compute them.

We treated each solver and whether minimal or maximal trap spaces are computed, as an independent computation and allowed a time limit of $10$ minutes for each.
When a computation failed (time-out or out of memory) $25$ times for the same $n$ we removed it from the benchmark loop.
To solve the ASP problems, we used \textsc{gringo} version 3.0.5 and \textsc{clasp} version 3.1.1 with the configuration 
\texttt{--dom-pref=32}, \texttt{--heu=domain}, \texttt{--dom-mod=6} (subset minimality) or \texttt{--dom-mod=7} (subset maximality).
To solve the ILP problems we used used the \textsc{python} interface to \textsc{gurobi}, version 5.0.
The executions were ran on a Linux desktop PC with 30 GB RAM and eight CPUs with 3.00GHz.
The results are given in Fig.~\ref{fig:05}.

The first observation is that the \textsc{potassco} formulation performs better than \textsc{gurobi} formulation for both maximal and minimal trap spaces.
The time required to compute maximal trap spaces appears to grow linearly while the one for minimal trap spaces grows exponentially.
A reason for why our ILP formulation is less efficient than the ASP formulation might be that the ILP solutions are constructed iteratively,
by forbidding the last found solution, while the ASP solutions are computed in a single execution of \textsc{clasp}.

The two plots at the bottom of Fig.~\ref{fig:05} give some statistical information about the number of trap spaces and their sizes in terms of fixed variables.
The number of fixed variables in a maximal or minimal trap space appears to be a fixed fraction of $n$ and the number of maximal trap spaces
is constant while the number of minimal trap spaces grows linearly with $n$.

\section{Application to a MAPK pathway model}\label{Results}
We computed the extremal trap spaces for a network that models the influence of the MAPK pathway on cancer cell fate decisions, as published in \cite{Grieco2013MAPK}.
It consists of $53$ variables that represent signaling proteins, genes and phenomenological components like \emph{proliferation} or \emph{apoptosis}.
We found that there are $18$ minimal trap spaces, $12$ of which are steady states.
Hence, following Application~2 in Sec.~\ref{Methods:Applications},
there are at least six cyclic attractors whose properties can be comprehensively investigated using the six corresponding reduced models.
The trap spaces $\MinTrapSpaces F\setminus\SteadyStates F$ as well as one reduced model are given in Fig.~\ref{fig:06}.
Next we ask how many attractors $A\subseteq \StateSpace[p]$ there are for each $p\in\MinTrapSpaces F\setminus\SteadyStates F$.
We used \textsc{bns} \cite{Dubrova2011FindingAttractors} to compute all attractors of $(S,\SyncTransition)$ and \textsc{genysis} \cite{Garg2008FindingAttractors} to compute the attractors
of $(S,\AsyncTransition)$.

\begin{figure*}
 \centering
 \includegraphics[width=\linewidth]{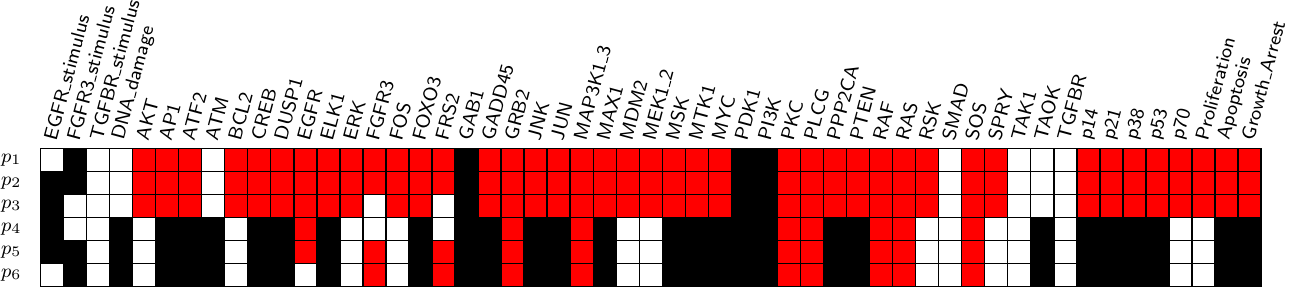}
 \begin{center}(a)\end{center}
 \raisebox{-0.5\height}{\includegraphics[width=.7\linewidth]{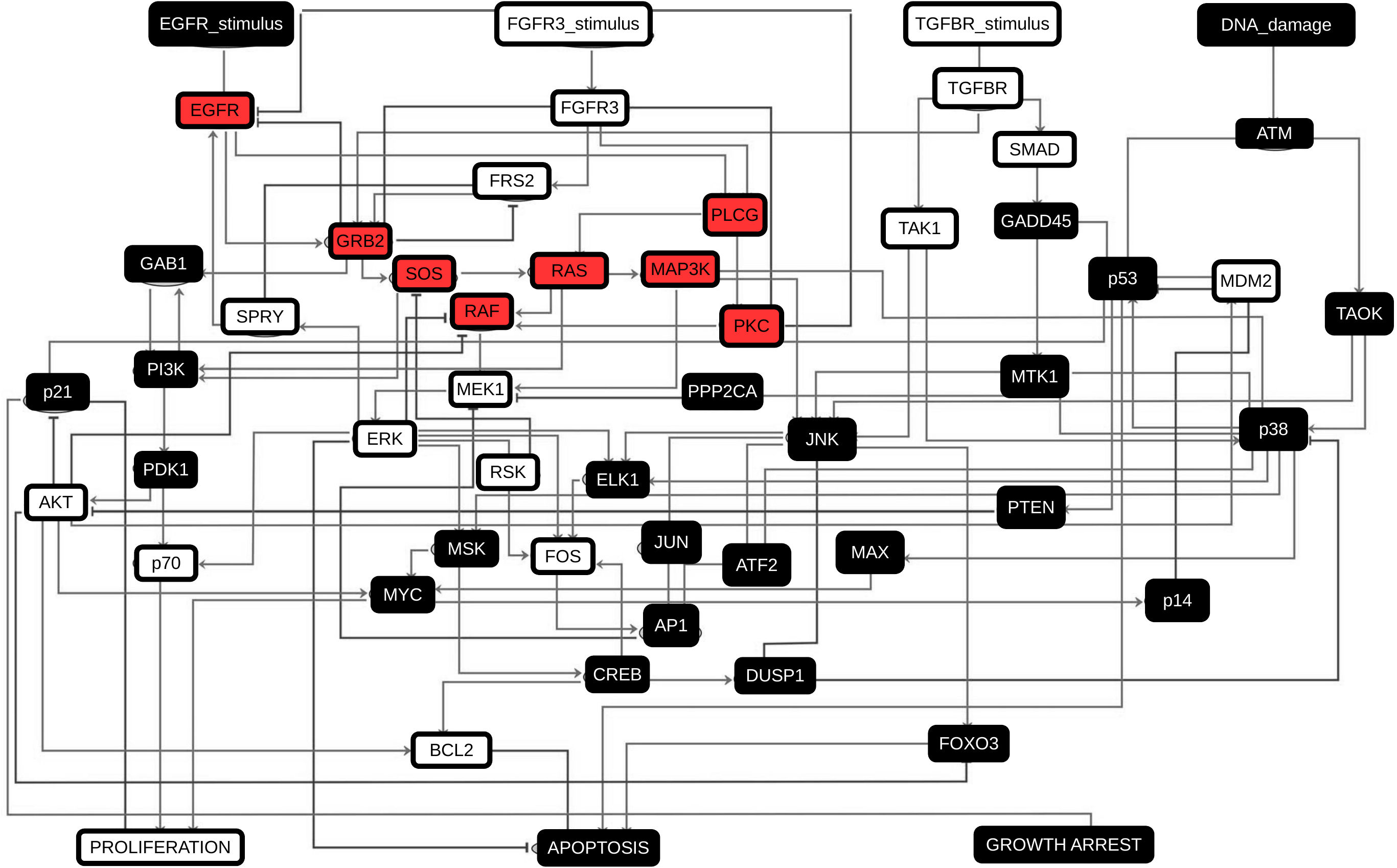}} % scale=\MyScale
 \hspace{5mm}
 \raisebox{-0.5\height}{\includegraphics[width=.22\linewidth]{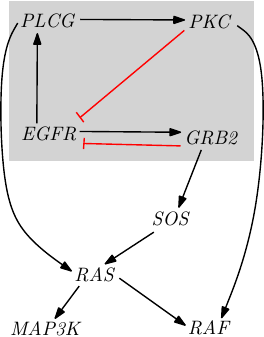}} % scale=\MyScale
 \begin{center}(b)\end{center}
 \caption{
 (a) The six trap spaces $\MinTrapSpaces F\setminus\SteadyStates F=\{p_1,\dots,p_6\}$ in terms of fixed variables in black and white and free variables in red.
 (b) We mapped the fixed and free variables of $p_4$ onto the interaction graph of the MAPK network.
 The corresponding reduced network consists of $|V|-|U_{p_4}|=53-45=8$ variables and is given on the right.
 }\label{fig:06}
\end{figure*}

It turns out that for the asynchronous update, each trap space $p\in\MinTrapSpaces F$ contains exactly one attractor.
In addition, the attractor $A\subseteq \StateSpace[p]$ satisfies $\SmallestSubspace A=p$ for each $p$, i.e., the fixed and free variables of $p$ correspond exactly to
the variables that are steady and oscillating in its attractor $A$.
We obtained this result by computing the attractors of the reduced networks $(V_p,F_p)$ of each $p\in\MinTrapSpaces{F}$ with \textsc{genysis}.
The question whether there is an attractor outside of $\cup_{p\in\MinTrapSpaces{F}}\StateSpace[p]$ can not be answered with this method because
the unreduced MAPK network is too large for \textsc{genysis}.

All attractors of the synchronous transition graph can be computed with \textsc{bns}, even for the unreduced MAPK network.
We found that there are 28 cyclic attractors.
Contrary to the asynchronous case, some minimal trap spaces do contain more than one attractor and $\SmallestSubspace A<p$ does frequently hold.
For example, $p_1$ contains four cyclic attractors and although $|U_{p_1}|=12$ between $19$ and $24$ variables are steady in the attractors.
In addition, $18$ attractor are not contained in any minimal trap space.

We then computed the maximal trap spaces and found that $|\MaxTrapSpaces{F}|=9$.
The MAPK network has four input variables $v\in V$, namely \emph{EGFR\_stimulus}, \emph{FGFR3\_stimulus}, \emph{TGFBR\_stimulus} and \emph{DNA\_damage},
that satisfy $f_v=v$.
Each input generates two maximal trap spaces $p^0,p^1$ defined by $U_{p^c}=\{v\}$ and $p^c(v)=c$, for $c\in\mathbb B$.
Eight of the maximal trap spaces are therefore explained by the inputs.
The additional $q\in\MaxTrapSpaces{F}$ is defined by $U_q:=\{\mathit{PI3K},\mathit{GAB1}\}$ and $q(\mathit{PI3K})=q(\mathit{GAB1})=1$.
A summary of the commitments of the different maximal trap spaces to steady states and cyclic attractors is given in Tab.~\ref{tab:01}.

\begin{table*}
 \caption{
 For each of the 9 maximal trap spaces we counted how many steady states and cyclic attractors in the synchronous and asynchronous transition graphs they contain.
 The first eight columns correspond to the inputs at either 0 or 1 and the last column refers to the maximal trap space in which \emph{PI3K} and \emph{GAB1} are
 both steady at 1.
 Note that $(\mathit{TGFBR\_stimulus}=1)$ alone ensures that the system will end up in a steady state (zero cyclic attractors reachable) and that most steady states (10 out of 12)
 are inside the \emph{PI3K, GAB1} trap space. 
 }\label{tab:01}
 \centering
 \makebox[\textwidth][c]{\begin{tabular}{lC{7mm}C{7mm}|C{7mm}C{7mm}|C{7mm}C{7mm}|C{7mm}C{7mm}|C{7mm}}
  \multirow{ 2}{*}{maximal trap spaces}
  &\multicolumn{2}{c|}{\scriptsize{\emph{EGF\_stim}}}&
  \multicolumn{2}{c|}{\scriptsize\emph{FGF\_stim}}&
  \multicolumn{2}{c|}{\scriptsize\emph{TGF\_stim}}&
  \multicolumn{2}{c|}{\scriptsize\emph{DNA\_dmg}}&
  \scriptsize\emph{PI3k,GAB1}\\%\cline{2-10}
 &$0$&
  $1$&
  $0$&
  $1$&
  $0$&
  \cellcolor{lightgray}$1$&
  $0$&
  $1$&
  $1$\\\hline\hline
  Steady states&						8&	4&	8&	4&	4&\cellcolor{lightgray}8&	6&	6&	10\\
  Cyclic attractors in $(\StateSpace, \SyncTransition)$&	24&	4&	17&	11&	28&\cellcolor{lightgray}0&	22&	6&	19\\
  Cyclic attractors in $(\StateSpace, \AsyncTransition)$&	2&	4&	2&	4&	6&\cellcolor{lightgray}0&	3&	3&	6\\
 \end{tabular}}
\end{table*}

\section{Discussion}\label{Discussion}
In this paper we propose to use trap spaces for the prediction of a network's attractors and for model reduction.
We propose a novel, optimization-based method for computing trap spaces that uses ILP or ASP solvers.
Its input is the prime implicant graph rather than the full state space (which is necessarily exponential in $V$).
The method can be extended from Boolean to multi-valued networks by generalizing the notion of prime implicants from Boolean to multi-valued expressions.
Thm.~\ref{thm:1}, the characterization of trap spaces, holds not only for synchronous or asynchronous but for \emph{any} update rule (see \cite{Gershenson2004DoTheyMatter}
for other update rules) and also stochastic simulations.
Note also that the prime implicant graph is similar to the process hitting graphs in \cite{Pauleve2014ProcessHitting}, certain prime implicants correspond for example to actions,
but different in that it attempts to capture a single model exactly rather than to approximate a family of models.
We are not aware that process hitting has been used to compute trap spaces.

The benchmark demonstrates that the prime implicant method for computing trap spaces scales well with the number of variables of a network.
All maximal and minimal trap spaces, including steady states, of a network with between 300 and 400 variables and poisson-distributed connectivity with $k=3$,
are computed with an average running time on the order of minutes.
A comparison with a brute force approach for computing trap spaces, by enumerating subspaces,
and the circuit-enumeration approach presented in \cite{Zanudo2013StableMotifs} would be desirable to further assess the efficiency of this approach.

The results for the MAPK network suggest that the usefulness of minimal trap spaces in approximating attractors depends a lot on the update rule.
For the asynchronous update, we observe that
(1) each $p\in\MinTrapSpaces{F}$ contains exactly one attractor $A$ and that
(2) $U_p$ corresponds exactly to the steady variables of $A$.
For the synchronous update, none of these properties are met and, additionally, there are $18$ attractors that lie outside of any minimal trap space.
It appears to us that the trap space method for predicting attractors is, in general, less precise for synchronous transition graphs.

Regarding the use of maximal trap spaces it is worth noting that although they usually contain several attractors,
they are likely to play an important role in decision making and processes like cell differentiation.
An example is the observation that $\mathit{TGFBR}=1$ eliminates the possibility of sustained oscillations and guarantees that the system ends up in a steady state.

Currently, we are working on a method that combines computing trap spaces with reduction techniques and \emph{model checking} to decide, for a given network,
whether properties (1) and (2) hold.

\renewcommand{\abstractname}{Acknowledgements}
\section*{Acknowledgements}
  We thank S. Videla, M. Ostrowski and T. Schaub of University of Potsdam for their help with the ASP formulation.

\bibliographystyle{plain}
\bibliography{klarner}

\end{document}